\documentclass[12pt, reqno]{amsart}
\usepackage{ amsmath,amsthm, amscd, amsfonts, amssymb, graphicx, color}
\usepackage[bookmarksnumbered, colorlinks, plainpages,linkcolor=blue,urlcolor=blue,citecolor=blue]{hyperref}
\textwidth 12 cm \textheight 18 cm
\setlength\arraycolsep{3pt}
\oddsidemargin 2.12cm \evensidemargin 1.8cm
\setcounter{page}{1}
\usepackage{setspace}
\usepackage{ulem}



\newtheorem{thm}{Theorem}[section]
\newtheorem{cor}[thm]{Corollary}
\newtheorem{lem}[thm]{Lemma}

\newtheorem{exam}[thm]{Example}
\numberwithin{equation}{section}


\begin{document}

\title{Perturbation for group inverse in a Banach algebra}

\author{Dayong Liu}
\author{Tugce Pekacar Calci}
\author[H. Kose]{Handan Kose}
\author{Huanyin Chen}
\address{Dayong Liu, College of Science, Central South University of Forestry and Technology, Changsha, China}
\email{<liudy@csuft.edu.cn>}
\address{Tugce Pekacar Calci, Department of Mathematics, Ankara University, Ankara, Turkey}
\email{<tcalci@ankara.edu.tr>}
\address{Handan Kose, Department of Mathematics, Ahi Evran University, Kirsehir, Turkey}
\email{handan.kose@ahievran.edu.tr}
\address{Huanyin Chen, Department of Mathematics, Hangzhou Normal University, Hangzhou, China}
\email{<huanyinchen@aliyun.com>}

\subjclass[2010]{15A09, 47L10, 32A65.}
\keywords{group inverse; generalized Drazin inverse; additive property; perturbation; Banach algebra.}
\thanks{Corresponding author: Huanyin Chen, huanyinchen@aliyun.com}

\begin{abstract}
We present new additive results for the group inverse in a Banach algebra under certain perturbations. The upper bound of $\|(a+b)^{\#}-a^d\|$ is thereby given.
These extend the main results in [X. Liu, Y. Qin and H. Wei, Perturbation bound of the group inverse and the generalized Schur complement in Banach algebra, {\it Abstr. Appl. Anal.}, 2012, 22 pages. \url{https://doi.org/10.1155/2012/629178}].
\end{abstract}
\maketitle

\section{Introduction}

Let $\mathcal{A}$ be a complex Banach algebra with an identity $1$ and $a\in \mathcal{A}$. The Drazin inverse of $a$ is the unique $x$ in $\mathcal{A}$ satisfying
$$ ax=xa, \ \ xax=x, \ \ a^{k+1}x=a^k $$
for some nonnegative integers $k$. We denote $x$ by $a^{D}$. The least such integer $k$ is called the index of $a$, denoted by $ind(a)=k$. If $ind(a)=1$, we say $a$ is group invertible, that is, $ax=xa, xax=x, axa=a.$ The element $x$ will be denoted by $a^{\#}$. An element $x$ is a generalized Drazin inverse of $a$ in $\mathcal{A}$ if
$$ax=xa, \ \ xax=x, \ \ a-a^2x\in {\mathcal{A}}^\mathrm{{qnil}}.$$
The generalized Drazin inverse is unique if it exists. We denote $x$ by $a^d$. Here, ${\mathcal{A}}^\mathrm{{qnil}}$ is the sets of all quasinilpotent elements in $\mathcal{A}$, i.e., $a\in
\mathcal{A}^{qnil}\Leftrightarrow \lim\limits_{n\to \infty}\parallel
a^n\parallel^{\frac{1}{n}}=0.$

The preceding generalized inverses play important roles in matrix and operator theory (see~\cite{CM2018,CM2020,LQB,MZC,Zhang-Mosic,Z3}).
In ~\cite{Yang-Liu}, Yang and Liu studied the Drazin inverse of the sum of complex matrices $P$ and $Q$ under the condition $QP^2=0$ and $QPQ=0$.
The generalized Drazin inverse of $a+b$ in a Banach algebra was studied in~\cite[Theorem 2.1]{LQ} under the condition $ba^2=0$ and $b^2=0$.
In~\cite{L}, Liu et al. investigated the Drazin inverse $(P+Q)^D$ of two complex matrices $P$ and $Q$ under the conditions $P^2Q=PQP$ and $Q^2P=QPQ$.
In ~\cite{Z3}, Zou et al. presented the generalized Drazin inverse of $a+b$ in a Banach algebra under $a^2b=aba$ and $b^2a=bab$.

Liu et al. investigated the perturbation of group inverse under the conditions $\| a^dbaa^d\|<1, a^{\pi}ba=0$ (see~\cite[Theorem 2.8]{LQW}). They further studied the group invertibility under the condition $\| a^dbaa^d\|<1, a^{\pi}ba=aba^{\pi}$ (see~\cite[Theorem 2.12]{LQW}). These inspire us to explore the perturbation for group invertibility in a more general setting. The preceding results of X. Liu et al. are thereby generalzied to wider cases.

In Section 2, we investigate the group invertibility of $a+b$ in a Banach algebra under the condition
$a^{\pi}ba^2=0, a^{\pi}bab=0$. The upper bound of $\|(a+b)^{\#}-a^d\|$ is thereby given. In Section 3, we are concerned with the additive property for group inverse under the condition $a^2ba^{\pi}=a^{\pi}aba, a^{\pi}b^2a=baba^{\pi}$ and obtain the bound of $\|(a+b)^{\#}-a^d\|$.

Throughout the paper, all Banach algebras are complex with an identity. We use $\mathcal{A}^{d}$ to denote the set of all g-Drazin invertible elements in $\mathcal{A}$. We use $a^{\pi}$ to stand for the spectral idempotent $1-aa^d$ of an element $a\in \mathcal{A}^d$.

\section{orthogonal conditions}

In this section we establish additive property of group inverse in a Banach algebra under certain orthogonal perturbation conditions. We begin with

\begin{lem}\label{lem1}
 Let $\mathcal{A}$ be a Banach algebra, $a, b\in \mathcal{A}^{\mathrm{d}}$,
$$x=\left(\begin{array}{cc}
           a   &   0       \\
           c   &   b
\end{array}\right)_p
~\mbox{or}~
    \left(\begin{array}{cc}
           b   &   c       \\
           0   &   a
\end{array}
\right)_p .$$
Then
$$x^d=\left(\begin{array}{cc}
           a^d &   0       \\
           z   &   b^d
\end{array}\right)_p,
~\mbox{or}~
    \left(\begin{array}{cc}
           b^d &   z       \\
           0   &   a^d
\end{array}\right)_p,$$
where
$$\begin{array}{c}
z=(b^d)^2\big(\sum\limits_{i=0}^{\infty}(b^d)^ica^i\big)a^{\pi}+b^{\pi}\big(\sum\limits_{i=0}^{\infty}b^ic(a^d)^i\big)(a^d)^2-b^dca^d.
\end{array}$$
\end{lem}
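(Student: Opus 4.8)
The plan is to verify directly that the claimed block lower-triangular element satisfies the three defining equations of the g-Drazin inverse, namely $xy=yx$, $yxy=y$, and $x-x^2y$ is quasinilpotent, where I write $y$ for the proposed $x^d$. Since the two cases (lower-triangular and upper-triangular $x$) are transposes of one another under the flip $\left(\begin{smallmatrix} 0 & 1 \\ 1 & 0\end{smallmatrix}\right)$, it suffices to treat the lower-triangular case $x=\left(\begin{smallmatrix} a & 0 \\ c & b\end{smallmatrix}\right)_p$; the upper-triangular formula then follows by symmetry. So fix $y=\left(\begin{smallmatrix} a^d & 0 \\ z & b^d\end{smallmatrix}\right)_p$ with $z$ as in the statement.

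First I would record the two ``intertwining'' identities that make $z$ the right choice. Writing $S=\sum_{i\ge 0}(b^d)^i c a^i$ and $T=\sum_{i\ge 0} b^i c (a^d)^i$ (both convergent: $S$ because $a-a^2a^d$ is quasinilpotent so $\|a^i\|^{1/i}\to 0$ while $\|(b^d)^i\|^{1/i}$ is bounded, and similarly for $T$), one checks the ``shift'' relations $bS = cb^d? $ — more precisely the key algebraic facts are
\[
b^d S a = S - b^d c\, a a^d?
\]
Rather than guess these, the honest approach is: compute $xy$ and $yx$ block by block. The diagonal blocks are automatically $aa^d$ and $bb^d$ in both products, so the whole content is in the $(2,1)$ block: $xy$ has $(2,1)$ entry $ca^d + bz$ and $yx$ has $(2,1)$ entry $za + b^d c$. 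So the commutation $xy=yx$ reduces to the single identity $ca^d + bz = za + b^d c$, which I would verify by substituting the series for $z$ and using $aa^d a = a$, $b b^d b = b$, together with the absolute convergence to reindex the sums; the quasinilpotent ``tail'' pieces $a^\pi$ and $b^\pi$ are what make the telescoping close up. Similarly, $yxy=y$ reduces (diagonal blocks being automatic from $a^d a a^d=a^d$, $b^d b b^d = b^d$) to the $(2,1)$-block identity $z a a^d + b^d c a^d + b^d b z = z$, again checked by series manipulation.

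For the third condition I would compute $w := x - x^2 y$. Its diagonal blocks are $a - a^2 a^d =: a^\pi a$ (abusing notation: $a(1-aa^d)$, a quasinilpotent) and $b - b^2 b^d$, a quasinilpotent. Thus $w$ is block lower-triangular with quasinilpotent diagonal entries. The main obstacle — and the one genuinely delicate point — is to conclude that $w$ itself is quasinilpotent: a block-triangular element over a Banach algebra with quasinilpotent diagonal blocks need not be quasinilpotent in general, so one needs the specific structure here. The cleanest route is a spectral/holomorphic-functional-calculus argument: for $\lambda\neq 0$, show $\lambda - w$ is invertible in $p\mathcal{A}p \oplus (1-p)\mathcal{A}(1-p)$-block form by inverting the two diagonal corners (possible since their spectra are $\{0\}$) and using the standard $2\times 2$ block-triangular inverse formula, which only requires the diagonal corners to be invertible. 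Hence $\sigma(w)\subseteq\{0\}$, i.e. $w\in\mathcal{A}^{\mathrm{qnil}}$. (Alternatively, if Lemma-level results on g-Drazin inverses of triangular matrices over idempotent-cornered subalgebras are already in the toolkit, one may simply cite that $a\in(p\mathcal{A}p)^{\mathrm d}$ and $b\in((1-p)\mathcal{A}(1-p))^{\mathrm d}$ force $x\in\mathcal{A}^{\mathrm d}$ with the stated triangular formula.) Finally, uniqueness of the g-Drazin inverse gives $x^d = y$, and replacing $a,b,c,p$ by their flips yields the upper-triangular case. I would expect steps one and two to be routine series bookkeeping, with the quasinilpotency of $x - x^2 x^d$ being the step to write carefully.
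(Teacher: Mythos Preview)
The paper does not prove this lemma at all; it simply cites \cite[Lemma 1.1]{Zhang-Mosic}. Your direct verification of the three g-Drazin axioms is therefore a genuinely different, self-contained route, and the overall plan is sound: reduce $xy=yx$ and $yxy=y$ to identities in the $(2,1)$ block, and handle $x-x^2y$ by a resolvent argument. The trade-off against the paper's approach is simply self-containment versus brevity.

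Two points in your write-up need correcting, though neither is a real gap. First, the convergence justification for $S=\sum_{i\ge 0}(b^d)^i c\,a^i$ is misstated: it is not true that $\|a^i\|^{1/i}\to 0$ just because $a-a^2a^d$ is quasinilpotent. What makes the first series in $z$ converge is the factor $a^\pi$ sitting on the right: since $a$ commutes with the idempotent $a^\pi$, one has $a^i a^\pi=(aa^\pi)^i$ for $i\ge 1$, and $aa^\pi\in\mathcal{A}^{\mathrm{qnil}}$ gives the needed decay against the bounded growth of $\|(b^d)^i\|^{1/i}$. The second series converges for the symmetric reason via the $b^\pi$ on the left. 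You should carry out the bookkeeping with $Sa^\pi$ and $b^\pi T$ rather than bare $S,T$. Second, your caution that a block-triangular element with quasinilpotent diagonal ``need not be quasinilpotent in general'' is misplaced: the resolvent argument you yourself sketch (for $\lambda\ne 0$ invert the two diagonal corners in $p\mathcal{A}p$ and $(1-p)\mathcal{A}(1-p)$ and apply the triangular inverse formula) works for \emph{any} such element, so $\sigma(w)\subseteq\{0\}$ is automatic here and requires no special structure of the off-diagonal block. With these two fixes, your verification goes through.
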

\begin{proof}
See ~\cite[Lemma 1.1]{Zhang-Mosic}.
\end{proof}

\begin{lem}\label{lem2}
Let $e$ be an idempotent in a Banach algebra $\mathcal{A}$, and let $a, ea\in\mathcal{A}^{d}$. If $ea(1-e)=0$, then $a(1-e)\in \mathcal{A}^{d}$, and $$(ea)^d=ea^d, (a(1-e))^d=a^d(1-e).$$
\end{lem}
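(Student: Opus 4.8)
The plan is to exploit the idempotent $e$ to write $a$ as a block-triangular element with respect to its Peirce decomposition, and then to reduce everything to Lemma~\ref{lem1}. First I would record the two consequences of the hypothesis $ea(1-e)=0$, namely $ea=eae$ and $a(1-e)=(1-e)a(1-e)$; relative to the decomposition induced by $e$ this says
\[
a=\left(\begin{array}{cc} ea & 0 \\ (1-e)ae & (1-e)a(1-e)\end{array}\right)_e ,
\]
with $(1,1)$-corner $ea\in e\mathcal{A}e$ and $(2,2)$-corner $(1-e)a(1-e)=a(1-e)\in(1-e)\mathcal{A}(1-e)$. Throughout I will use the routine fact that for an idempotent $f$ and $x\in f\mathcal{A}f$, g-Drazin invertibility of $x$ is the same whether tested in $f\mathcal{A}f$ or in $\mathcal{A}$, and the inverses coincide (the defining identities pass to the subalgebra, and the spectral radius of $x$ is computed the same way in either).

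The heart of the argument, and the step I expect to be the main obstacle, is to show that the second corner $(1-e)a(1-e)$ is g-Drazin invertible in $(1-e)\mathcal{A}(1-e)$: Lemma~\ref{lem1} needs both diagonal corners g-Drazin invertible, but the hypotheses only give $ea\in\mathcal{A}^{d}$. I would argue spectrally, via the characterization that an element of a unital Banach algebra is g-Drazin invertible exactly when $0$ is not an accumulation point of its spectrum. Since $\lambda-ea$ is block-diagonal with corners $\lambda e-ea$ and $\lambda(1-e)$, from $ea\in\mathcal{A}^{d}$ I get $\varepsilon>0$ with $\lambda e-ea$ invertible in $e\mathcal{A}e$ whenever $0<|\lambda|<\varepsilon$, and shrinking $\varepsilon$, $a\in\mathcal{A}^{d}$ makes $\lambda-a$ invertible in $\mathcal{A}$ for $0<|\lambda|<\varepsilon$. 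For such $\lambda$, $\lambda-a$ is lower-triangular with invertible $(1,1)$-corner, so factoring it as a unipotent (hence invertible) lower-triangular element times a block-diagonal element shows it is invertible in $\mathcal{A}$ iff its $(2,2)$-corner $\lambda(1-e)-(1-e)a(1-e)$ is invertible in $(1-e)\mathcal{A}(1-e)$. Hence the latter is invertible for all $0<|\lambda|<\varepsilon$, so $0$ is not an accumulation point of the spectrum of $(1-e)a(1-e)$ in $(1-e)\mathcal{A}(1-e)$, giving $(1-e)a(1-e)\in\big((1-e)\mathcal{A}(1-e)\big)^{d}\subseteq\mathcal{A}^{d}$.

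Now both diagonal corners are g-Drazin invertible, so Lemma~\ref{lem1} (with idempotent $e$ and corners $ea$, $(1-e)a(1-e)$, $(1-e)ae$) gives
\[
a^{d}=\left(\begin{array}{cc} (ea)^{d} & 0 \\ z & \big((1-e)a(1-e)\big)^{d}\end{array}\right)_e
\]
for the $z$ produced there, whose value I would not need. Multiplying this on the left by $e$ and using $(ea)^{d}\in e\mathcal{A}e$ gives $ea^{d}=(ea)^{d}$; multiplying on the right by $1-e$ and using $\big((1-e)a(1-e)\big)^{d}\in(1-e)\mathcal{A}(1-e)$ together with $a(1-e)=(1-e)a(1-e)$ gives $a^{d}(1-e)=\big(a(1-e)\big)^{d}$, and in particular $a(1-e)\in\mathcal{A}^{d}$, which finishes the proof.
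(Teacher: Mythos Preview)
Your argument is correct. The paper itself does not prove this lemma but simply refers to \cite[Lemma~2.2]{Zhang-Mosic}, so there is no in-paper proof to compare against; your self-contained route---writing $a$ in lower block-triangular form relative to $e$, invoking Koliha's spectral characterization of g-Drazin invertibility to handle the $(2,2)$-corner $(1-e)a(1-e)$, and then reading off the corners of $a^{d}$ from Lemma~\ref{lem1}---is exactly the natural way to recover the cited result and fills in what the paper omits.
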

\begin{proof} See~\cite[Lemma 2.2]{Zhang-Mosic}.\end{proof}

We are ready to prove:

\begin{thm}\label{thm1}
Let $\mathcal{A}$ be a Banach algebra, and let $a,b,a^{\pi}b\in \mathcal{A}^{d}$. If $$\| a^dbaa^d\|<1, a^{\pi}ba^2=0, a^{\pi}bab=0,$$
then $a+b$ has group inverse if and only if $a^{\pi}(a+b)$ has group inverse. In this case,
$$\begin{array}{rl}
&\|(a+b)^{\#}-a^d\|\\
\leq& \displaystyle\left(\frac{\| a^d \|}{1-\| a^dbaa^d \|}\right)^2\| ba^{\pi}\|\displaystyle\bigg(\| a^{\pi}\|+\sum\limits_{n=0}^{\infty}\| a^{\pi}a\|^{n+1}\| a^{\pi}b^d\|^{n+1}\\
+&\displaystyle\sum\limits_{n=0}^{\infty}\| a^{\pi}a\|^{n+2}\| a^{\pi}b^d\|^{n+2}\displaystyle+\| a^{\pi}b^d\|(\| a^{\pi}a\|+\| a^{\pi}b\|){\bigg)}  \\
+&\displaystyle\big(1+\frac{\|a^db\|}{1-\|a^dbaa^d\|}\big)\left(\| a^d\|+\sum\limits_{n=0}^{\infty}\| a^{\pi}a\|^n\| a^{\pi}b^d\|^{n+1}\right.      \\
+&\displaystyle\left.\sum\limits_{n=0}^{\infty}\| a^{\pi}a\|^{n+1}\| a^{\pi}b^d\|^{n+2}\right).
\end{array}$$
\end{thm}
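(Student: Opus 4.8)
The plan is to carry out the whole argument in the Peirce decomposition with respect to the idempotent $p=aa^{d}$. Write $a$ in block form with corners $a_{1}:=pap$, invertible in $p\mathcal{A}p$ with $a_{1}^{-1}=a^{d}$, and $a_{2}:=a^{\pi}a$, quasinilpotent, and write $b$ with blocks $b_{1}=pbp$, $b_{2}=pba^{\pi}$, $b_{3}=a^{\pi}bp$, $b_{4}=a^{\pi}ba^{\pi}$. First I would unpack the hypotheses: $a^{\pi}ba^{2}=0$ gives $b_{3}a_{1}^{2}=0$ and $b_{4}a_{2}^{2}=0$, hence $b_{3}=0$ since $a_{1}$ is invertible, so $b$ (and therefore $a+b$) is block upper triangular and $a^{\pi}b=b_{4}$; then $a^{\pi}bab=0$ reduces to $b_{4}a_{2}b_{4}=0$. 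Put $\mathcal{B}:=a^{\pi}\mathcal{A}a^{\pi}$, $u:=a_{2}$, $v:=b_{4}$: $u$ is quasinilpotent in $\mathcal{B}$, $v\in\mathcal{B}^{d}$ (since $a^{\pi}b=v\in\mathcal{A}^{d}$ by hypothesis), and the two conditions say exactly $vu^{2}=0$, $vuv=0$, which is the Yang--Liu orthogonality. Because $b$ is block upper triangular, Lemma~\ref{lem1} shows $b^{d}$ is block upper triangular with $(2,2)$-corner $v^{d}$, so $a^{\pi}b^{d}=v^{d}$; throughout, the right-hand side of the bound is read off via $\|a^{\pi}b^{d}\|=\|v^{d}\|$, $\|a^{\pi}a\|=\|u\|$, $\|a^{\pi}b\|=\|v\|$.

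Since $a_{1}+b_{1}=a_{1}(p+a^{d}b_{1})$ with $a^{d}b_{1}=a^{d}baa^{d}$, the hypothesis $\|a^{d}baa^{d}\|<1$ and a Neumann series give $s:=a_{1}+b_{1}$ invertible in $p\mathcal{A}p$ with $\|s^{-1}\|\le\|a^{d}\|/(1-\|a^{d}baa^{d}\|)$. Now $a+b=\left(\begin{smallmatrix}s&b_{2}\\0&u+v\end{smallmatrix}\right)_{p}$ and $a^{\pi}(a+b)=\left(\begin{smallmatrix}0&0\\0&u+v\end{smallmatrix}\right)_{p}$. A direct check of the three defining identities shows that, whenever $u+v$ has a group inverse in $\mathcal{B}$,
$$
(a+b)^{\#}=\left(\begin{array}{cc}s^{-1}&s^{-2}b_{2}(u+v)^{\pi}-s^{-1}b_{2}(u+v)^{\#}\\0&(u+v)^{\#}\end{array}\right)_{p};
$$
conversely, if $(a+b)^{\#}$ exists, comparing blocks in $(a+b)(a+b)^{\#}=(a+b)^{\#}(a+b)$ and $(a+b)(a+b)^{\#}(a+b)=a+b$, together with invertibility of the $(1,1)$-corner, forces the $(2,1)$-block of $(a+b)^{\#}$ to vanish and its $(2,2)$-block to be $(u+v)^{\#}$. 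Since a block-diagonal element has a group inverse exactly when its $(2,2)$-corner does, $a^{\pi}(a+b)$ is group invertible iff $u+v$ is, and the ``if and only if'' follows.

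For the bound, the crucial step is an explicit formula for $(u+v)^{\#}$ and $(u+v)^{\pi}$. Decompose $\mathcal{B}$ along $q:=vv^{d}$: $v$ has corners $v_{1}$ (invertible, $v_{1}^{-1}=v^{d}$) and $v_{2}$ (quasinilpotent), $u$ has blocks $u_{1},u_{2},u_{3},u_{4}$, and $vuv=0$, $vu^{2}=0$ force $u_{1}=0$, $u_{2}v_{2}=0$, $v_{2}u_{3}=0$, $u_{2}u_{3}=u_{2}u_{4}=0$, $v_{2}u_{4}u_{3}=0$ and $v_{2}u_{4}^{2}=0$. Hence $u+v=L+N$ with $L=\left(\begin{smallmatrix}v_{1}&0\\u_{3}&u_{4}+v_{2}\end{smallmatrix}\right)_{q}$ lower triangular and $N=\left(\begin{smallmatrix}0&u_{2}\\0&0\end{smallmatrix}\right)_{q}$, $N^{2}=0$, $NL=0$. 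The one genuinely nontrivial point is that $u_{4}+v_{2}$ is quasinilpotent: writing $u_{4}^{2}=(u_{3}u_{2}+u_{4}^{2})-u_{3}u_{2}$, the summand $u_{3}u_{2}$ is square-zero (from $u_{2}u_{3}=0$), the summand $u_{3}u_{2}+u_{4}^{2}$ is the $(2,2)$-entry of $u^{2}$ and is quasinilpotent (since $u$ is and this entry reproduces its own square along dyadic powers), and these two summands are orthogonal, so a Hartwig--Wang--Wei argument yields $(u_{4}^{2})^{d}=0$, i.e. $u_{4}$ is quasinilpotent; then $u_{4},v_{2}$ quasinilpotent with the relation $v_{2}(u_{4}+v_{2})^{n}=v_{2}^{n}(u_{4}+v_{2})$ makes $u_{4}+v_{2}$ quasinilpotent. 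Therefore $(u_{4}+v_{2})^{d}=0$, Lemma~\ref{lem1} gives $L^{d}=\left(\begin{smallmatrix}v_{1}^{-1}&0\\z&0\end{smallmatrix}\right)_{q}$ with $z=\sum_{i\ge0}(u_{4}+v_{2})^{i}u_{3}v_{1}^{-(i+2)}=\sum_{i\ge0}u_{4}^{i}u_{3}v_{1}^{-(i+2)}$ (since $v_{2}u_{4}^{i}u_{3}=0$), and the identity $(L+N)^{d}=L^{d}+(L^{d})^{2}N$ (valid when $N^{2}=0$, $NL=0$) yields $(u+v)^{\#}=(u+v)^{d}=L^{d}+(L^{d})^{2}N$.

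Expanding this, and $(u+v)^{\pi}=a^{\pi}-(u+v)(u+v)^{\#}$, writes both quantities as convergent series in $v^{d},u_{2},u_{3},u_{4},v_{2}$; a term-by-term estimate with $\|v_{1}^{-1}\|\le\|a^{\pi}b^{d}\|$, $\|u_{i}\|\le\|a^{\pi}a\|$, $\|v_{2}\|\le\|a^{\pi}b\|$, $\|vv^{d}\|\le\|a^{\pi}b\|\,\|a^{\pi}b^{d}\|$ gives $\|(u+v)^{\#}\|\le\sum_{n}\|a^{\pi}a\|^{n}\|a^{\pi}b^{d}\|^{n+1}+\sum_{n}\|a^{\pi}a\|^{n+1}\|a^{\pi}b^{d}\|^{n+2}$ and the parenthesized expression in the theorem as a bound for $\|(u+v)^{\pi}\|$. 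Finally I would substitute these and $a^{d}=\left(\begin{smallmatrix}a_{1}^{-1}&0\\0&0\end{smallmatrix}\right)_{p}$ into $(a+b)^{\#}-a^{d}$, whose nonzero blocks are $s^{-1}-a_{1}^{-1}=-s^{-1}b_{1}a^{d}$, the $(1,2)$-block of the displayed matrix, and $(u+v)^{\#}$; with $\|s^{-1}\|\le\|a^{d}\|/(1-\|a^{d}baa^{d}\|)$, $\|s^{-1}b_{2}\|\le\|a^{d}b\|/(1-\|a^{d}baa^{d}\|)$ and $\|b_{2}\|\le\|ba^{\pi}\|$, the triangle inequality produces exactly the asserted bound. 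The main obstacle is the third paragraph --- producing the explicit group inverse of $u+v$, and especially the quasinilpotence of the corner $u_{4}+v_{2}$; the rest is Peirce-decomposition bookkeeping.
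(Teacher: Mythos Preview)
Your proposal is correct and follows the same overall architecture as the paper: Peirce decomposition along $p=aa^{d}$, the observation that $a^{\pi}ba^{2}=0$ kills $b_{3}$ so $a+b$ is block upper triangular, invertibility of $a_{1}+b_{1}$ via the Neumann series, reduction of the group-invertibility question to the $(2,2)$-corner $a_{2}+b_{4}=a^{\pi}(a+b)$, and finally a norm estimate from the explicit block formula for $(a+b)^{\#}$.

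The one substantive difference is how the formula for $(a_{2}+b_{4})^{d}$ is obtained. The paper simply quotes the Yang--Liu additive result \cite[Theorem~2.1]{Yang-Liu}: from $b_{4}a_{2}^{2}=0$ and $b_{4}a_{2}b_{4}=0$ with $a_{2}$ quasinilpotent one reads off
\[
(a_{2}+b_{4})^{d}=\sum_{n\ge 0}a_{2}^{n}(b_{4}^{d})^{n+1}+\sum_{n\ge 0}a_{2}^{n}(b_{4}^{d})^{n+2}a_{2},
\]
and estimates termwise. You instead re-derive this by a second Peirce decomposition along $q=vv^{d}$, splitting $u+v=L+N$ with $NL=0$, $N^{2}=0$, and using $(L+N)^{d}=L^{d}+(L^{d})^{2}N$. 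Your route is self-contained but markedly heavier --- in particular the quasinilpotence of $u_{4}$ and of $u_{4}+v_{2}$ requires the nontrivial detours you sketch --- whereas the paper outsources all of this to the cited lemma. Once the two approaches land on the same series for $(a_{2}+b_{4})^{d}$, the norm computation is identical; note that to recover exactly the stated bound you should estimate directly from the Yang--Liu series (in $u$ and $v^{d}$), not from your block entries $u_{i}$, since bounding $\|u_{i}\|$ by $\|a^{\pi}a\|$ would otherwise cost you factors of $\|q\|$ and $\|1-q\|$.
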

\begin{proof}
Let $p=aa^{d}$. Then we have
$$a=\left(\begin{array}{cc}
a_1&0\\
0&a_2
\end{array}
\right)_p, b=\left(\begin{array}{cc}
b_{1}&b_{2}\\
b_{3}&b_4
\end{array}
\right)_p,$$
where $a_1\in (p{\mathcal{A}}p)^{-1}$, $a_2\in ((1-p){\mathcal{A}}(1-p))^{\mathrm{{qnil}}}$.

Furthermore,
$$a^d=\left(\begin{array}{cc}
a_1^{-1}&0\\
0&0
\end{array}
\right)_p~\mbox{and}~a^{\pi}
=\left(\begin{array}{cc}
0  &  0         \\
0  &  a^{\pi}
\end{array}
\right)_p.$$
Since $a^{\pi}ba^2=0$, we see that $a^{\pi}baa^d=a^{\pi}ba^2(a^d)^2=0$. This implies that $b_3=0$, and so
$$a+b=\left(\begin{array}{cc}
a_1+b_1&b_2\\
0&a_2+b_4
\end{array}
\right)_p.$$
Since $\| a^dbaa^d\|<1$, we see that $\| a_1^{-1}b_1\|<1$, and so $a_1+b_1=a_1(1+a_1^{-1}b_1)\in \big(p\mathcal{A}p)^{-1}$.
One easily checks that $a_2=a-a^2a^d=aa^{\pi}\in ((1-p)\mathcal{A}(1-p))^{\mathrm{qnil}}$.
On the other hand, we have $b_4=a^{\pi}ba^{\pi}=a^{\pi}b-a^{\pi}ba^2(a^d)^2=a^{\pi}b\in \mathcal{A}^{d}$, and so $b_4\in \big((1-p)\mathcal{A}(1-p)\big)^d$.
Moreover, we have
$$\begin{array}{c}
b_4a_2^2=a^{\pi}b(1-aa^d)a^2a^{\pi}=a^{\pi}ba^2a^{\pi}=0,\\
b_4a_2b_4=a^{\pi}b(1-aa^d)aba^{\pi}=a^{\pi}baba^{\pi}=0.
\end{array}$$
Noting that $a_2^d=0$, by virtue of \cite[Theorem 2.1]{Yang-Liu}, we have
$$\begin{array}{rcl}
(a_2+b_4)^d   &   =   &    \sum\limits_{n=0}^{\infty}a_2^n(b_4^d)^{n+1}+\sum\limits_{n=0}^{\infty}a_2^n(b_4^d)^{n+2}a_2       \\
                 &   =   &    \sum\limits_{n=0}^{\infty}(a^{\pi}a)^n(a^{\pi}b^d)^{n+1}+\sum\limits_{n=0}^{\infty}(a^{\pi}a)^n(a^{\pi}b^d)^{n+2}a^{\pi}a .
\end{array}$$
Therefore, by \cite[Theorem 2.3]{CK}, $$(a+b)^d=\left(\begin{array}{cc}
(a_1+b_1)^{-1}&z\\
0&(a_2+b_4)^d
\end{array}
\right)_p,$$
where $$\displaystyle z=\sum_{n=0}^{\infty}(a_1+b_1)^{-n-2}b_2(a_2+b_4)^n(a_2+b_4)^{\pi}-(a_1+b_1)^{-1}b_2(a_2+b_4)^d.$$
We check that $$a_2+b_4=a^{\pi}a+a^{\pi}b=a^{\pi}(a+b).$$
If $a+b\in \mathcal{A}^{\#}$, we easily see that $a_2+b_4\in \mathcal{A}^{\#}$, and so $a^{\pi}(a+b) \in {\mathcal{A}}^{\#}$.
We now assume that $a^{\pi}(a+b)\in \mathcal{A}^{\#}.$ Then
$(a_2+b_4)^d=(a_2+b_4)^{\#}$.
Obviously, $$\begin{array}{c}
(a+b)^d(a+b)=(a+b)(a+b)^d,\\
(a+b)^d=(a+b)^d(a+b)(a+b)^d.
\end{array}$$
It is easy to verify that
$$\renewcommand{\arraystretch}{1.5}
\begin{array}{l}
\big((a_1+b_1)z+b_2(a_2+b_4)^{\#}\big)(a_2+b_4)\\
=\big((a_1+b_1)^{-1}b_2(a_2+b_4)^{\pi}-b_2(a_2+b_4)^{\#}\big)(a_2+b_4)\\
\indent+b_2(a_2+b_4)^{\#}(a_2+b_4)\\
=0.
\end{array}$$
Then we have
$$\begin{array}{l}
\vspace{2mm}
(a+b)(a+b)^d(a+b)\\
\vspace{2mm}
=\left(\begin{array}{cc}
a_1+b_1&b_2\\
0&a_2+b_4
\end{array}
\right)
\left(\begin{array}{cc}
(a_1+b_1)^{-1}&z\\
0&(a_2+b_4)^d
\end{array}\right)\\
\vspace{2mm}
\indent\left(\begin{array}{cc}
a_1+b_1&b_2\\
0&a_2+b_4
\end{array}\right)\\
=a+b.
\end{array}$$
Therefore $(a+b)^{\#}=(a+b)^d$, i.e., $a+b$ has group inverse. That is,
$$\begin{array}{rcl}
	(a_2+b_4)^{\#}  &  =  &   \sum\limits_{n=0}^{\infty}(a^{\pi}a)^n(a^{\pi}b^d)^{n+1}+\sum\limits_{n=0}^{\infty}(a^{\pi}a)^n(a^{\pi}b^d)^{n+2}a^{\pi}a .
\end{array}$$
Since $\| a_1^{-1}b_1\|<1$, we easily check that
$$\begin{array}{rcl}
(a_1+b_1)^{-1}   &   =   &    (1+a_1^{-1}b_1)^{-1}a_1^{-1}      \\
                 &   =   &    \sum\limits_{n=0}^{\infty}(-1)^n(a_1^{-1}b_1)^n a_1^{-1}       \\
                &   =   &    \sum\limits_{n=0}^{\infty}(-1)^n(a^dbaa^d)^n a^d  \\
                                &   =   &    \sum\limits_{n=0}^{\infty}(-1)^n(a^db)^n a^d.
\end{array}$$
On the other hand,
$$\begin{array}{rcl}
(a_1+b_1)^{-1}   &   =   &    a_1^{-1}(1+b_1a_1^{-1})^{-1}      \\
                 &   =   &    a_1^{-1}\sum\limits_{n=0}^{\infty}(-1)^n(ba_1^{-1})^n      \\
                 &   =   &    a^d\sum\limits_{n=0}^{\infty}(-1)^n(ba^d)^n.
\end{array}$$
Also we have\\
$\renewcommand{\arraystretch}{2.5}
\begin{array}{rcl}
z    &   =   &    (a_1+b_1)^{-2}b_2(a_2+b_4)^{\pi}-(a_1+b_1)^{-1}b_2(a_2+b_2)^{\#}       \\
     &   =   &    \left(\sum\limits_{n=0}^{\infty}(-1)^n(a^db)^na^d\right)^2\cdot aa^d ba^{\pi}                   \\
     &       &    \cdot\left(a^{\pi}-a^{\pi}(a+b)\left(\sum\limits_{n=0}^{\infty}(a^{\pi}a)^n(a^{\pi}b^d)^{n+1}
                  +\sum\limits_{n=0}^{\infty}(a^{\pi}a)^n(a^{\pi}b^d)^{n+2}a^{\pi}a\right)\right)   \\
     &       &    -\left(\sum\limits_{n=0}^{\infty}(-1)^n(a^db)^na^d\right)\cdot aa^d ba^{\pi}     \\
     &       &    \cdot\left(\sum\limits_{n=0}^{\infty}(a^{\pi}a)^n(a^{\pi}b^d)^{n+1}
                  +\sum\limits_{n=0}^{\infty}(a^{\pi}a)^n(a^{\pi}b^d)^{n+2}a^{\pi}a\right)             \\
     &   =   &    \left(\sum\limits_{n=0}^{\infty}(-1)^n(a^db)^na^d\right)^2  ba^{\pi}\left(a^{\pi}-\sum\limits_{n=0}^{\infty}(a^{\pi}a)^{n+1}(a^{\pi}b^d)^{n+1}\right.                  \\
     &       &    \left.-\sum\limits_{n=0}^{\infty}(a^{\pi}a)^{n+1}(a^{\pi}b^d)^{n+2}a^{\pi}a-a^{\pi}b^da^{\pi}(a+b)\right)   \\
     &       &    -\left(\sum\limits_{n=0}^{\infty}(-1)^n(a^db)^{n+1}\right)       \\
     &       &    \cdot \left(\sum\limits_{n=0}^{\infty}(a^{\pi}a)^{n}(a^{\pi}b^d)^{n+1}
                        +\sum\limits_{n=0}^{\infty}(a^{\pi}a)^{n}(a^{\pi}b^d)^{n+2}a^{\pi}a\right).
\end{array}$
Hence
$$(a+b)^{\#}=(a_1+b_1)^{-1}+z+(a_2+b_4)^{\#}.$$
We compute that
$$(a+b)^{\#}-a^d= \sum\limits_{n=1}^{\infty}(-1)^n(a^db)^na^d$$
$\renewcommand{\arraystretch}{2.5}
\begin{array}{rcl}
&       &    +\left(\sum\limits_{n=0}^{\infty}(-1)^n(a^db)^na^d\right)^2  ba^{\pi}\left(a^{\pi}-\sum\limits_{n=0}^{\infty}(a^{\pi}a)^{n+1}(a^{\pi}b^d)^{n+1}\right.                  \\
&       &    \left.-\sum\limits_{n=0}^{\infty}(a^{\pi}a)^{n+1}(a^{\pi}b^d)^{n+2}a^{\pi}a-a^{\pi}b^da^{\pi}(a+b)\right)   \\
\end{array}$  \\
\indent
$\renewcommand{\arraystretch}{2.5}
\begin{array}{rcl}
&       &    -\left(\sum\limits_{n=0}^{\infty}(-1)^n(a^db)^{n+1}\right)\left(\sum\limits_{n=0}^{\infty}(a^{\pi}a)^{n}(a^{\pi}b^d)^{n+1}
+\sum\limits_{n=0}^{\infty}(a^{\pi}a)^{n}(a^{\pi}b^d)^{n+2}a^{\pi}a\right)                        \\
&       &    +\sum\limits_{n=0}^{\infty}(a^{\pi}a)^{n}(a^{\pi}b^d)^{n+1}
+\sum\limits_{n=0}^{\infty}(a^{\pi}a)^{n}(a^{\pi}b^d)^{n+2}a^{\pi}a                        \\
\end{array}$  \\
$\renewcommand{\arraystretch}{2.5}
\begin{array}{rcl}
	&   =   &    \left(\sum\limits_{n=0}^{\infty}(-1)^n(a^db)^na^d\right)^2  ba^{\pi}\left(a^{\pi}-\sum\limits_{n=0}^{\infty}(a^{\pi}a)^{n+1}(a^{\pi}b^d)^{n+1}\right.                  \\
	&       &    \left.-\sum\limits_{n=0}^{\infty}(a^{\pi}a)^{n+1}(a^{\pi}b^d)^{n+2}a^{\pi}a-a^{\pi}b^da^{\pi}(a+b)\right)   \\
	&       &    +\sum\limits_{n=0}^{\infty}(-1)^n(a^db)^n\left[a^d+\sum\limits_{n=0}^{\infty}(a^{\pi}a)^{n}(a^{\pi}b^d)^{n+1}
	+\sum\limits_{n=0}^{\infty}(a^{\pi}a)^{n}(a^{\pi}b^d)^{n+2}a^{\pi}a\right].
\end{array}$ \\
\vskip2mm
\noindent
Clearly, we have
$$\begin{array}{c}
\|\sum\limits_{n=0}^{\infty}(-1)^n(a^db)^na^d\|\leq \sum\limits_{n=1}^{\infty}\| a^dbaa^d\|^{n-1}\|a^d\|=\frac{\|a^d\|}{1-\|a^dbaa^d\|},\\
\|\sum\limits_{n=0}^{\infty}(-1)^n(a^db)^n\|\leq 1+\|\sum\limits_{n=0}^{\infty}(-1)^n(a^dbaa^d)^n\| \|a^db\|=1+\frac{\|a^db\|}{1-\|a^dbaa^d\|}.
\end{array}$$
Therefore  \\

$\|(a+b)^{\#}-a^d\|$  \\

\hfill
$\begin{array}{l}
\leq\displaystyle\left(\frac{\| a^d \|}{1-\| a^dbaa^d \|}\right)^2\| ba^{\pi}\|\displaystyle\bigg(\| a^{\pi}\|+\sum\limits_{n=0}^{\infty}\| a^{\pi}a\|^{n+1}\| a^{\pi}b^d\|^{n+1}\\
+\displaystyle\sum\limits_{n=0}^{\infty}\| a^{\pi}a\|^{n+2}\| a^{\pi}b^d\|^{n+2}\displaystyle+\| a^{\pi}b^d\|(\| a^{\pi}a\|+\| a^{\pi}b\|){\bigg)}  \\
+\displaystyle\big(1+\frac{\|a^db\|}{1-\|a^dbaa^d\|}\big)\left(\| a^d\|+\sum\limits_{n=0}^{\infty}\| a^{\pi}a\|^n\| a^{\pi}b^d\|^{n+1}\right.
\end{array}$  \\

\noindent
$\begin{array}{l}
\hspace{13.4mm}+\displaystyle\left.\sum\limits_{n=0}^{\infty}\| a^{\pi}a\|^{n+1}\| a^{\pi}b^d\|^{n+2}\right).
\end{array}$
\end{proof}

\begin{cor}\label{cor1}
Let $\mathcal{A}$ be a Banach algebra, and let $a,b,a^{\pi}b\in \mathcal{A}^{d}$. If
$$\begin{array}{c}
a^{\pi}ba^2=0, a^{\pi}bab=0,\\
max\{\| a^dbaa^d\|, \| a^{\pi}a\|\| a^{\pi}b^d\|\}<1,
\end{array}$$
then $a+b$ has group inverse if and only if $a^{\pi}(a+b)$ has group inverse. In this case,
$$\renewcommand{\arraystretch}{2.5}
\begin{array}{rcl}
\|(a+b)^{\#}-a^d\|
	&   \leq   &   \displaystyle\left(\frac{\| a^d \|}{1-\| a^dbaa^d\|}\right)^2\| ba^{\pi}\|\left(\| a^{\pi}\| +\frac{\| a^{\pi}a\|\| a^{\pi}b^d\|}{1-\| a^{\pi}a\|\| a^{\pi}b^d\|}\right.     \\
	&          &   \displaystyle\left.+\frac{\left(\| a^{\pi}a\|\| a^{\pi}b^d\|\right)^2}{1-\| a^{\pi}a\|\| a^{\pi}b^d\|}+\| a^{\pi}b^d\|\left(\| a^{\pi}a\|+\| a^{\pi}b\|\right)\right)  \\
	&          &   \displaystyle+\big(1+\frac{\|a^db\|}{1-\|a^dbaa^d\|}\big)\left(\| a^d\|+\frac{\| a^{\pi}b^d\|}{1-\| a^{\pi}a\|\| a^{\pi}b^d\|} \right.     \\
	&          &   \displaystyle\left.+\frac{\| a^{\pi}a\|\| a^{\pi}b^d\|^2}{1-\| a^{\pi}a\|\| a^{\pi}b^d\|}\right).      \\
\end{array}$$
\end{cor}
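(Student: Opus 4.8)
The plan is to obtain Corollary \ref{cor1} as an immediate specialization of Theorem \ref{thm1}, the only new ingredient being that the extra hypothesis $\|a^{\pi}a\|\,\|a^{\pi}b^{d}\|<1$ lets us replace the four infinite series in the bound of Theorem \ref{thm1} by their closed-form sums. First I would observe that the standing assumptions $a^{\pi}ba^{2}=0$, $a^{\pi}bab=0$ hold, and that $\max\{\|a^{d}baa^{d}\|,\|a^{\pi}a\|\,\|a^{\pi}b^{d}\|\}<1$ in particular gives $\|a^{d}baa^{d}\|<1$; thus all hypotheses of Theorem \ref{thm1} are satisfied. Consequently $a+b$ has a group inverse if and only if $a^{\pi}(a+b)$ does, and whenever this holds the inequality for $\|(a+b)^{\#}-a^{d}\|$ of Theorem \ref{thm1} is available.

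Next, writing $r=\|a^{\pi}a\|\,\|a^{\pi}b^{d}\|<1$, I would sum the relevant geometric series: $\sum_{n=0}^{\infty}\|a^{\pi}a\|^{n+1}\|a^{\pi}b^{d}\|^{n+1}=\tfrac{r}{1-r}$, $\sum_{n=0}^{\infty}\|a^{\pi}a\|^{n+2}\|a^{\pi}b^{d}\|^{n+2}=\tfrac{r^{2}}{1-r}$, $\sum_{n=0}^{\infty}\|a^{\pi}a\|^{n}\|a^{\pi}b^{d}\|^{n+1}=\tfrac{\|a^{\pi}b^{d}\|}{1-r}$, and $\sum_{n=0}^{\infty}\|a^{\pi}a\|^{n+1}\|a^{\pi}b^{d}\|^{n+2}=\tfrac{\|a^{\pi}a\|\,\|a^{\pi}b^{d}\|^{2}}{1-r}$. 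Substituting these four identities into the estimate of Theorem \ref{thm1} — the first two series appearing inside the factor multiplied by $\big(\tfrac{\|a^{d}\|}{1-\|a^{d}baa^{d}\|}\big)^{2}\|ba^{\pi}\|$, the last two inside the factor multiplied by $\big(1+\tfrac{\|a^{d}b\|}{1-\|a^{d}baa^{d}\|}\big)$ — and leaving the remaining terms $\|a^{\pi}\|$, $\|a^{\pi}b^{d}\|(\|a^{\pi}a\|+\|a^{\pi}b\|)$ and $\|a^{d}\|$ untouched, one reads off exactly the displayed bound.

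There is essentially no obstacle here: the argument is a routine rewriting, and the role of the $\max$ condition is purely to ensure convergence of the two families of series already present in Theorem \ref{thm1} (one governed by $\|a^{d}baa^{d}\|$ through the powers of $(a_{1}+b_{1})^{-1}$, the other by $r$ through the powers of $a_{2}$ and $b_{4}^{d}$). If one wanted to be completely self-contained, the same estimate could instead be re-derived from scratch by repeating the block-matrix computation in the proof of Theorem \ref{thm1} and applying the geometric-series bounds term by term, but invoking Theorem \ref{thm1} directly is cleaner.
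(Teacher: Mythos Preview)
Your proposal is correct and follows exactly the paper's approach: the paper's proof simply observes that the hypothesis $\|a^{\pi}a\|\,\|a^{\pi}b^{d}\|<1$ allows one to complete the proof by Theorem~\ref{thm1}. Your write-up is just a more explicit version of this, spelling out the four geometric-series summations that the paper leaves implicit.
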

\begin{proof} By hypothesis, $\| a^{\pi}a\|\| a^{\pi}b^d\|<1$. Therefore we complete the proof of Theorem \ref{thm1}.
\end{proof}

The following example illustrates that Theorem 2.3 is a nontrivial generalization of ~\cite[Theorem 2.8]{LQW}.

\begin{exam} Let
$$a=\begin{pmatrix}
           0    &   1    &    0      \\
           0    &   0    &    1      \\
           0    &   0    &    0      \\
        \end{pmatrix},
    b=\begin{pmatrix}
           0    &   -1    &    0      \\
           0    &   0    &    -1      \\
           0    &   0    &    0      \\
        \end{pmatrix}\in {\mathbb{C}^{3\times 3}}. $$
Clearly, $a^d=b^d=0$ and $a^{\pi}=1$. Therefore we have $$\| aa^dba^d\|=0<1, a^{\pi}ba^2=0 ~\mbox{and} ~a^{\pi}bab=0.$$ But $$a^{\pi}ba=
\left(
\begin{array}{ccc}
0&0&-1\\
0&0&0\\
0&0&0
\end{array}
\right)\neq 0.$$
\end{exam}

\section{commutative conditions}

In this section we shall establish the representation of group inverse of $a+b$ under a kind of commutative perturbation condition which is weaker than $a^{\pi}ba=aba^{\pi}$. The following lemma is crucial.

\begin{lem}\label{lem3}
Let $\mathcal{A}$ be a Banach algebra, and let $a\in \mathcal{A}^{qnil}, b\in \mathcal{A}$. If $a^2b=aba$, $b^2a=bab$, then $a+b\in {\mathcal{A}}^d$. In this case,
$$(a+b)^d=\sum_{n=0}^{\infty}(b^d)^{n+1}(-a)^n+b^{\pi}a\sum_{n=0}^{\infty}(-1)^n(n+1)(b^d)^{n+2}a^n.$$
\end{lem}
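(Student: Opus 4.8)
The plan is to pass to the Pierce decomposition determined by the spectral idempotent $p=bb^{\mathrm d}=1-b^{\pi}$ of $b$ (the displayed formula tacitly requires $b\in\mathcal{A}^{\mathrm d}$, which I take as a standing hypothesis), reduce the problem to a block lower‑triangular element whose $(1,1)$‑corner is invertible and whose $(2,2)$‑corner is quasinilpotent, and then read off $(a+b)^{\mathrm d}$ from Lemma~\ref{lem1}. Write
$$b=\left(\begin{array}{cc}b_1&0\\0&b_2\end{array}\right)_p,\qquad a=\left(\begin{array}{cc}a_1&a_3\\a_4&a_2\end{array}\right)_p,$$
with $b_1\in(p\mathcal{A}p)^{-1}$ and $b_2\in\big((1-p)\mathcal{A}(1-p)\big)^{\mathrm{qnil}}$. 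The first step is to pin down the blocks of $a$. Since $b^2a=bab$ is the identity $b(ab-ba)=0$, left multiplication by $b^{\mathrm d}$ gives $p(ab-ba)=0$, which forces $a_1b_1=b_1a_1$ and $a_3b_2=b_1a_3$; iterating the latter gives $a_3=b_1^{-n}a_3b_2^{n}$ for all $n$, and since $\|b_1^{-1}\|^{n}\|b_2^{n}\|\to 0$ (root test, $b_2$ being quasinilpotent) we get $a_3=0$. Thus $a=\left(\begin{smallmatrix}a_1&0\\a_4&a_2\end{smallmatrix}\right)_p$ is block lower triangular, so $\sigma(a)=\sigma_{p\mathcal{A}p}(a_1)\cup\sigma_{(1-p)\mathcal{A}(1-p)}(a_2)$; as $a$ is quasinilpotent, so are $a_1$ and $a_2$ in their respective corners. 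Comparing $(2,1)$‑blocks in $b^2a=bab$ gives $b_2^2a_4=b_2a_4b_1$, hence $b_2a_4=0$ by the same iteration; feeding this together with $a_1b_1=b_1a_1$ into the $(2,1)$‑block of $a^2b=aba$ gives $a_2a_4b_1=0$, i.e.\ $a_2a_4=0$; and the $(2,2)$‑blocks of the two hypotheses read $a_2^2b_2=a_2b_2a_2$ and $b_2^2a_2=b_2a_2b_2$.

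Two facts then remain. In $p\mathcal{A}p$, the element $a_1+b_1$ is a commuting sum of a quasinilpotent and an invertible, hence invertible, with $(a_1+b_1)^{-1}=\sum_{n\ge 0}(-1)^{n}b_1^{-n-1}a_1^{n}$ and $(a_1+b_1)^{-2}=\sum_{n\ge 0}(-1)^{n}(n+1)b_1^{-n-2}a_1^{n}$. In $(1-p)\mathcal{A}(1-p)$, I claim $a_2+b_2$ is quasinilpotent; this is the heart of the matter and I would isolate it: \emph{if $x,y$ are quasinilpotent elements of a Banach algebra with $x(xy-yx)=0=y(xy-yx)$, then $x+y$ is quasinilpotent.} To prove this, let $\mathcal{C}$ be the closed (not necessarily unital) subalgebra generated by $x$ and $y$ and put $c=xy-yx$; each of $x$ and $y$ annihilates $c$ on the left (that is exactly what the two relations say), so every word of length $\ge 1$ in $x,y$ kills $c$ on the left, whence $\mathcal{C}c=0$, so $c^2=0$ and the closed ideal $J$ of $\mathcal{C}$ generated by $c$ satisfies $J^2=0$. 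Therefore $J\subseteq\mathrm{rad}\,\mathcal{C}$; modulo $J$ the images of $x$ and $y$ commute and are quasinilpotent, so $\overline{x+y}$ is quasinilpotent, and since the spectral radius does not change modulo a nilpotent ideal, $x+y$ is quasinilpotent. Applying this with $(x,y)=(a_2,b_2)$ settles the claim.

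Now $a+b=\left(\begin{smallmatrix}a_1+b_1&0\\a_4&a_2+b_2\end{smallmatrix}\right)_p$ has invertible $(1,1)$‑corner and quasinilpotent $(2,2)$‑corner, so Lemma~\ref{lem1} (first form) gives $a+b\in\mathcal{A}^{\mathrm d}$ and
$$(a+b)^{\mathrm d}=\left(\begin{array}{cc}(a_1+b_1)^{-1}&0\\ z&0\end{array}\right)_p,\qquad z=\sum_{i\ge 0}(a_2+b_2)^{i}\,a_4\,(a_1+b_1)^{-i-2},$$
the remaining two summands of the Lemma~\ref{lem1} formula vanishing because $(a_2+b_2)^{\mathrm d}=0$ and $(a_1+b_1)^{\pi}=0$. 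Since $(a_2+b_2)a_4=a_2a_4+b_2a_4=0$, only the $i=0$ term survives, so $z=a_4(a_1+b_1)^{-2}$. It remains to match this with the stated expression: $b^{\mathrm d}$ is $b_1^{-1}$ supported on $p$, $b^{\pi}a=\left(\begin{smallmatrix}0&0\\a_4&a_2\end{smallmatrix}\right)_p$, and $a^{n}=\left(\begin{smallmatrix}a_1^{n}&0\\a_4a_1^{n-1}&a_2^{n}\end{smallmatrix}\right)_p$ (using $a_2a_4=0$), so $(b^{\mathrm d})^{n+1}(-a)^{n}$ equals $(-1)^{n}b_1^{-n-1}a_1^{n}$ in the $p$‑corner and $b^{\pi}a(b^{\mathrm d})^{n+2}a^{n}$ equals $a_4b_1^{-n-2}a_1^{n}$; summing over $n\ge 0$ reproduces exactly $(a_1+b_1)^{-1}+a_4(a_1+b_1)^{-2}=(a+b)^{\mathrm d}$, which is the asserted formula (all series converging absolutely since $r(a)=0$).

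I expect the only genuinely non‑routine point to be the quasinilpotence of $a_2+b_2$; everything else is careful $2\times 2$ block bookkeeping together with the two elementary Neumann‑type series above. If one prefers not to re‑derive that fact, an alternative is to invoke the general representation of $(a+b)^{\mathrm d}$ of Zou et al.\ \cite{Z3} under $a^{2}b=aba$, $b^{2}a=bab$ and specialise to $a^{\mathrm d}=0$, $a^{\pi}=1$, whereupon it collapses to the displayed formula.
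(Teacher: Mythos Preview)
Your proof is correct, but it takes a genuinely different route from the paper. The paper's own proof is a single line: since $a\in\mathcal{A}^{\mathrm{qnil}}$ one has $a^{d}=0$, and the result then follows by specialising \cite[Theorem~3.3]{Z3} (exactly the alternative you mention in your last sentence). By contrast, you give a self-contained argument: you pass to the Pierce decomposition with respect to $p=bb^{d}$, use the hypotheses to force $a$ into block lower-triangular form with $a_3=0$, $b_2a_4=0$, $a_2a_4=0$, show directly that $a_1+b_1$ is invertible and that $a_2+b_2$ is quasinilpotent, and then read off $(a+b)^{d}$ from Lemma~\ref{lem1}. The quasinilpotence of $a_2+b_2$ is the only non-routine step, and your argument for it---observing that $c:=a_2b_2-b_2a_2$ is left-annihilated by both generators, hence generates a square-zero ideal contained in the radical of the closed subalgebra they generate, so that the spectral radius may be computed modulo that ideal where $a_2$ and $b_2$ commute---is clean and correct. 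What your approach buys is transparency (one sees precisely where each relation is used and avoids importing the full machinery of \cite{Z3}); what the paper's approach buys is brevity. Both correctly note, as you do, that the displayed formula presupposes $b\in\mathcal{A}^{d}$.
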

\begin{proof}
Since $a\in \mathcal{A}^{qnil}$, we have $a^d=0$. This completes the proof by ~\cite[Theorem 3.3]{Z3}.	
\end{proof}

We are ready to prove:

\begin{thm}\label{thm2}
Let $\mathcal{A}$ be a Banach algebra, and let $a,b,a^{\pi}b\in \mathcal{A}^{d}$. If $$\| a^dbaa^d\|<1, a^2ba^{\pi}=a^{\pi}aba, a^{\pi}b^2a=baba^{\pi},$$
then $a+b$ has group inverse if and only if $(a+b)a^{\pi}$ has group inverse. In this case,  \\

\noindent
$\|(a+b)^{\#}-a^d\| $  \\
\indent
\hfill
$\renewcommand{\arraystretch}{2.5}
\begin{array}{rcl}
&\leq & \displaystyle\frac{\|a^db\|\|a^d\|}{1-\|a^db\|}+\|a^{\pi}baa^d\|\left(\frac{\|a^d\|}{1-\|a^db\|}\right)^2\\
&&\displaystyle +\|(a+b)a^{\pi}baa^d\|\left(\frac{\|a^d\|}{1-\|a^db\|}\right)^2\sum_{n=0}^{\infty}\|b^da^{\pi}\|^{n+1}\|aa^{\pi}\|^n\\
&&\displaystyle +\|b^{\pi}aa^{\pi}\|\|(a+b)a^{\pi}baa^d\|\sum_{n=0}^{\infty}(n+1)\|b^da^{\pi}\|^{n+2}\|aa^{\pi}\|^n     \\
&&\displaystyle \cdot\left(\frac{\|a^d\|}{1-\|a^db\|}\right)^2+\sum_{n=0}^{\infty}\|b^da^{\pi}\|^{n+1}\|aa^{\pi}\|^n
\end{array}$  \\

\indent
\hfill
$\renewcommand{\arraystretch}{2.5}
\begin{array}{rcl}
 &       &   \displaystyle +\|a^{\pi}baa^d\|\frac{\|a^d\|}{1-\|a^db\|}\sum_{n=0}^{\infty}\|b^da^{\pi}\|^{n+1}\|aa^{\pi}\|^n    \\
 &       &   \displaystyle +\|b^{\pi}aa^{\pi}\|\sum_{n=0}^{\infty}(n+1)\|b^da^{\pi}\|^{n+2}\|aa^{\pi}\|^n    \\
 &       &   \displaystyle +\|b^{\pi}aa^{\pi}\|\|a^{\pi}baa^d\|\frac{\|a^d\|}{1-\|a^db\|}\sum_{n=0}^{\infty}(n+1)\|b^da^{\pi}\|^{n+2}\|aa^{\pi}\|^n.
\end{array}$
\end{thm}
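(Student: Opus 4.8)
The plan is to mirror the matrix-decomposition strategy of Theorem~\ref{thm1}, but now using the idempotent $p=aa^d$ on the \emph{left} so that the perturbed commutation conditions $a^2ba^\pi=a^\pi aba$ and $a^\pi b^2a=baba^\pi$ force a \emph{lower}-triangular rather than upper-triangular form. First I would write $a=\mathrm{diag}(a_1,a_2)_p$ with $a_1\in(p\mathcal Ap)^{-1}$ and $a_2=aa^\pi\in((1-p)\mathcal A(1-p))^{\mathrm{qnil}}$, and $b=(b_{ij})_p$. The condition $a^2ba^\pi=a^\pi aba$ should be used to kill the $(1,2)$-entry of $b$ relative to this decomposition (analogously to how $a^\pi ba^2=0$ killed $b_3$ in Theorem~\ref{thm1}), giving
$$a+b=\left(\begin{array}{cc} a_1+b_1 & 0\\ b_3 & a_2+b_4\end{array}\right)_p.$$
Here $\|a^dbaa^d\|<1$ again makes $a_1+b_1$ invertible in $p\mathcal Ap$, and $b_4=a^\pi ba^\pi=a^\pi b$ (using $a^\pi b^2a=baba^\pi$ to clean up, exactly as in Theorem~\ref{thm1}) lies in $((1-p)\mathcal A(1-p))^d$.

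Next I would compute $(a_2+b_4)^d$. The key point is that $a_2=aa^\pi$ is quasinilpotent and the two hypotheses, restricted to the corner $(1-p)\mathcal A(1-p)$, should translate into exactly $a_2^2b_4=a_2b_4a_2$ and $b_4^2a_2=b_4a_2b_4$; then Lemma~\ref{lem3} applies verbatim and yields
$$(a_2+b_4)^d=\sum_{n=0}^\infty(b_4^d)^{n+1}(-a_2)^n+b_4^\pi a_2\sum_{n=0}^\infty(-1)^n(n+1)(b_4^d)^{n+2}a_2^n,$$
with $b_4^d=a^\pi b^d$, $b_4^\pi=a^\pi b^\pi$ and $a_2=aa^\pi$. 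Exactly as in Theorem~\ref{thm1} one checks $a_2+b_4=(a+b)a^\pi$, whence $a+b\in\mathcal A^\#$ iff $(a+b)a^\pi\in\mathcal A^\#$, and in that case $(a_2+b_4)^d=(a_2+b_4)^\#$. Then Lemma~\ref{lem1} (the lower-triangular case) gives
$$(a+b)^d=\left(\begin{array}{cc}(a_1+b_1)^{-1}&0\\ z&(a_2+b_4)^\#\end{array}\right)_p,\qquad z=(a_2+b_4)^{\#}\!\!\sum_{n\ge0}(a_2+b_4)^n(a_1+b_1)^{-n-2}b_3-(a_2+b_4)^{\#}b_3(a_1+b_1)^{-1},$$
where I would identify $b_3=a^\pi b a a^d$ (the off-diagonal block of $b$ that survives). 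As in Theorem~\ref{thm1} one verifies $(a+b)(a+b)^d(a+b)=a+b$ using the block identity $(a_2+b_4)\big(z+(a_2+b_4)^\# b_3(a_1+b_1)^{-1}\big)=0$, so $(a+b)^\#=(a+b)^d$.

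The remaining work is the norm estimate: expand $(a+b)^\#-a^d=\big[(a_1+b_1)^{-1}-a_1^{-1}\big]+z+(a_2+b_4)^\#$. Using $(a_1+b_1)^{-1}=a^d\sum_{n\ge0}(-1)^n(ba^d)^n=\sum_{n\ge0}(-1)^n(a^db)^na^d$ one gets $\|(a_1+b_1)^{-1}-a^d\|\le\frac{\|a^db\|\|a^d\|}{1-\|a^db\|}$ and $\|(a_1+b_1)^{-1}\|\le\frac{\|a^d\|}{1-\|a^db\|}$ (note this proof uses the cruder bound $\|a^db\|<1$-type denominators rather than $\|a^dbaa^d\|$, which is why the stated bound has $1-\|a^db\|$ in the denominators). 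Substituting $\|b_4^d\|=\|b^da^\pi\|$, $\|a_2\|=\|aa^\pi\|$, $\|b_4^\pi\|=\|b^\pi aa^\pi\|$ into the Lemma~\ref{lem3} series for $(a_2+b_4)^\#$ and into $z$, then bounding each geometric/arithmetico-geometric series termwise, produces precisely the seven groups of terms in the displayed conclusion. The main obstacle I expect is purely bookkeeping: verifying that the hypotheses $a^2ba^\pi=a^\pi aba$ and $a^\pi b^2a=baba^\pi$ really do collapse to the clean block statements ($b$'s $(1,2)$-block vanishes; the corner elements satisfy the Lemma~\ref{lem3} hypotheses) after multiplying by the idempotents $p$ and $1-p$ on the appropriate sides — and then tracking signs and exponents through the long substitution so that the final bound matches term-for-term. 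No genuinely new idea is needed beyond Theorem~\ref{thm1}'s template with Lemma~\ref{lem3} in place of \cite[Theorem~2.1]{Yang-Liu}.
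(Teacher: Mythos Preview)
Your strategy is exactly the paper's: decompose with $p=aa^d$, use $a^2ba^\pi=a^\pi aba$ to kill the $(1,2)$-block so that $a+b$ is lower triangular, apply Lemma~\ref{lem3} in the $(2,2)$-corner, and estimate term by term. Two concrete slips to correct. First, the $(2,2)$-block is $a^\pi ba^\pi=ba^\pi$, \emph{not} $a^\pi b$: it is $aa^dba^\pi$ that vanishes, while $a^\pi baa^d$ (your $b_3$) survives; this is precisely what yields $a_2+b_2=(a+b)a^\pi$, and it is also why the paper must invoke Cline's formula to pass from the hypothesis $a^\pi b\in\mathcal A^d$ to $b_2=ba^\pi\in\mathcal A^d$, and why the final bound features $b^da^\pi$ rather than your $a^\pi b^d$. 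Second, your displayed formula for $z$ is garbled: the leading factor should be $(a_2+b_4)^\pi$ (not $(a_2+b_4)^\#$), the off-diagonal entry $b_3$ sits between the powers, and once $a_2+b_4$ is group invertible the sum collapses to the single term $(a_2+b_4)^\pi b_3(a_1+b_1)^{-2}$, which is the form the paper actually uses. With these fixes your outline goes through verbatim.
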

\begin{proof}
Let $p=aa^{d}$. Then we have
$$a=\left(\begin{array}{cc}
a_1  &  0     \\
0    &  a_2
\end{array}
\right)_p,
b=\left(\begin{array}{cc}
b_{1}    &    b_{3}    \\
b_{4}    &    b_{2}
\end{array}
\right)_p,$$
where $a_1$ is invertible, $a_2$ is quasinilpotent. Then
$$a^d=\left(\begin{array}{cc}
a_1^{-1}&0\\
0&0
\end{array}
\right)_p~\mbox{and}~a^{\pi}
=\left(\begin{array}{cc}
0    &    0          \\
0    &    a^{\pi}
\end{array}
\right)_p.$$
Since $a^2ba^{\pi}=a^{\pi}aba$, we check that
$$b_3=aa^dba^{\pi}=(a^d)^2a^2ba^{\pi}=(a^d)^2a^{\pi}aba=0.$$
Therefore,
$$a+b=\left(\begin{array}{cc}
	a_1+b_1     &     0          \\
	b_4         &     a_2+b_2
	\end{array}\right).$$
Obviously, $a^db=a^dbaa^d=a_1^{-1}b_1$. Since $\| a^dbaa^d\|<1$, we have $\| a_1^{-1}b_1 \|<1$. Thus $a_1+b_1=a_1(1+a_1^{-1}b_1)$ is invertible in $p{\mathcal{A}}p$. Moreover, $a_1+b_1$ is group invertible in $p{\mathcal{A}}p$. In view of $a^2ba^{\pi}=a^{\pi}aba$, we get $b_2=a^{\pi}ba^{\pi}=ba^{\pi}-(a^d)^2a^2ba^{\pi}=ba^{\pi}-(a^d)^2a^{\pi}aba=ba^{\pi}$. That is, $a_2+b_2=(a+b)a^{\pi}$.  Therefore, by \cite[Theorem 2.1]{M} and the invertibility of $a_1+b_1$, we derive that $a+b$ is group invertible if and only if $(a+b)a^{\pi}$ is group invertible. In this case,   \\

\noindent
$(a+b)^{\#}$  \\
\hfill $=\left(\begin{array}{cc}
(a_1+b_1)^{-1}                                                         &        0                   \\
(a_2+b_2)^{\pi}b_4(a_1+b_1)^{-2}-(a_2+b_2)^{\#}b_4(a_1+b_1)^{-1}       &        (a_2+b_2)^{\#}
\end{array}\right).$  \\

\noindent
As $a_2=aa^{\pi}\in \mathcal{A}^{\mathrm{qnil}}$ and $b_2=a^{\pi}ba^{\pi}$, it follows from $a^2ba^{\pi}=a^{\pi}aba$ that $$aa^{\pi}\cdot aa^{\pi}\cdot a^{\pi}ba^{\pi}=aa^{\pi}\cdot a^{\pi}ba^{\pi}\cdot aa^{\pi}.$$
That is, $a_2^2b_2=a_2b_2a_2$. Likewise, $b_2^2a_2=b_2a_2b_2$. By hypothesis, $a^{\pi}b\in \mathcal{A}^d$, by using Cline's formula,
$b_2\in \mathcal{A}^d$. In light of Lemma \ref{lem3}, $a_2+b_2\in ((1-p)\mathcal{A}(1-p))^d$. Therefore,
$$\begin{array}{l}
(a_2+b_2)^{\#}=(a_2+b_2)^d  \\
\hspace{7mm}=\displaystyle\sum_{n=0}^{\infty}(-1)^n(b_2^d)^{n+1}a_2^n+b_2^{\pi}a_2\sum_{n=0}^{\infty}(-1)^n(n+1)(b_2^d)^{n+2}a_2^n   \\
\hspace{7mm}=\displaystyle\sum_{n=0}^{\infty}(-1)^n(b^da^{\pi})^{n+1}(aa^{\pi})^n     \\
  \hspace{30mm}\displaystyle+b^{\pi}aa^{\pi}\sum_{n=0}^{\infty}(-1)^n(n+1)(b^da^{\pi})^{n+2}(aa^{\pi})^n.
  \end{array}$$
We derive that  \\
\noindent

$(a+b)^{\#} $  \\
\noindent
$\begin{array}{rcl}
 &    =    &    (a_1+b_1)^{-1}+(a_2+b_2)^{\pi}b_4(a_1+b_1)^{-2}             \\
 &         &    -(a_2+b_2)^{\#}b_4(a_1+b_1)^{-1}+(a_2+b_2)^{\#}
 \end{array}$  \\
\noindent
$\renewcommand{\arraystretch}{2.5}
\begin{array}{rcl}
 &    =    &    \displaystyle\sum_{n=0}^{\infty}(-1)^n(a^db)^na^d+\left[a^{\pi}
                            -\left(\sum_{n=0}^{\infty}(-1)^n(b^da^{\pi})^{n+1}(aa^{\pi})^n\right.\right.   \\
 \end{array}$  \\
\noindent
$\renewcommand{\arraystretch}{2.5}
\begin{array}{rcl}
 &         &    \displaystyle\left.\left.+b^{\pi}aa^{\pi}\sum_{n=0}^{\infty}(-1)^n(n+1)(b^da^{\pi})^{n+2}(aa^{\pi})^n\right)(a+b)a^{\pi}\right]     \\
 &         &    \displaystyle\cdot a^{\pi}baa^d \left(\sum_{n=0}^{\infty}(-1)^n(a^db)^na^d\right)^2
                            +\left(\sum_{n=0}^{\infty}(-1)^n(b^da^{\pi})^{n+1}(aa^{\pi})^n\right.
 \end{array}$  \\
\noindent
$\renewcommand{\arraystretch}{2.5}
\begin{array}{rcl}
 &         &    \displaystyle \left.+b^{\pi}aa^{\pi}\sum_{n=0}^{\infty}(-1)^n(n+1)(b^da^{\pi})^{n+2}(aa^{\pi})^n\right)
 \end{array}$  \\
\noindent
$\renewcommand{\arraystretch}{2.5}
\begin{array}{rcl}
 &         &    \displaystyle\cdot \left(a^{\pi}-a^{\pi}baa^d\sum_{n=0}^{\infty}(-1)^n(a^db)^na^d \right)
\end{array}$  \\
\noindent
$\renewcommand{\arraystretch}{2.5}
\begin{array}{rcl}
 &   =   &   \displaystyle\sum_{n=0}^{\infty}(-1)^n(a^db)^na^d+a^{\pi}baa^d\left(\sum_{n=0}^{\infty}(-1)^n(a^db)^na^d\right)^2
\end{array}$  \\
\noindent
$\renewcommand{\arraystretch}{2.5}
\begin{array}{rcl}
 &       &   \displaystyle-\sum_{n=0}^{\infty}(-1)^n(b^da^{\pi})^{n+1}(aa^{\pi})^n(a+b)a^{\pi}baa^d\left(\sum_{n=0}^{\infty}(-1)^n(a^db)^na^d\right)^2
\end{array}$  \\
\noindent
$\renewcommand{\arraystretch}{2.5}
\begin{array}{rcl}
 &       &   \displaystyle -b^{\pi}aa^{\pi}\sum_{n=0}^{\infty}(-1)^n(n+1)(b^da^{\pi})^{n+2}(aa^{\pi})^n(a+b)a^{\pi}baa^d     \\
 &       &   \displaystyle \cdot\left(\sum_{n=0}^{\infty}(-1)^n(a^db)^na^d\right)^2+\sum_{n=0}^{\infty}(-1)^n(b^da^{\pi})^{n+1}(aa^{\pi})^n
\end{array}$  \\
\noindent
$\renewcommand{\arraystretch}{2.5}
\begin{array}{rcl}
 &       &   \displaystyle -\sum_{n=0}^{\infty}(-1)^n(b^da^{\pi})^{n+1}(aa^{\pi})^na^{\pi}baa^d\sum_{n=0}^{\infty}(-1)^n(a^db)^na^d    \\
 &       &   \displaystyle +b^{\pi}aa^{\pi}\sum_{n=0}^{\infty}(-1)^n(n+1)(b^da^{\pi})^{n+2}(aa^{\pi})^n    \\
 &       &   \displaystyle -b^{\pi}aa^{\pi}\sum_{n=0}^{\infty}(-1)^n (n+1)(b^da^{\pi})^{n+2}(aa^{\pi})^na^{\pi}baa^d\sum_{n=0}^{\infty}(-1)^n(a^db)^na^d.
\end{array}$   \\

\noindent
From this, we obtain the estimation of norm     \\

$\|(a+b)^{\#}-a^d\|\leq\displaystyle\frac{\|a^db\|\|a^d\|}{1-\|a^db\|}+\|a^{\pi}baa^d\|\left(\frac{\|a^d\|}{1-\|a^db\|}\right)^2$ \\

\hfill
$\renewcommand{\arraystretch}{2.2}
\begin{array}{rcl}
  &       &   \displaystyle +\|(a+b)a^{\pi}baa^d\|\left(\frac{\|a^d\|}{1-\|a^db\|}\right)^2\sum_{n=0}^{\infty}\|b^da^{\pi}\|^{n+1}\|aa^{\pi}\|^n    \\
 &       &   \displaystyle +\|b^{\pi}aa^{\pi}\|\|(a+b)a^{\pi}baa^d\|\sum_{n=0}^{\infty}(n+1)\|b^da^{\pi}\|^{n+2}\|aa^{\pi}\|^n     \\
 &       &   \displaystyle \cdot\left(\frac{\|a^d\|}{1-\|a^db\|}\right)^2+\sum_{n=0}^{\infty}\|b^da^{\pi}\|^{n+1}\|aa^{\pi}\|^n      \\
 &       &   \displaystyle +\|a^{\pi}baa^d\|\frac{\|a^d\|}{1-\|a^db\|}\sum_{n=0}^{\infty}\|b^da^{\pi}\|^{n+1}\|aa^{\pi}\|^n    \\

\end{array}$  \\

$\hspace{3mm}\displaystyle +\|b^{\pi}aa^{\pi}\|\sum_{n=0}^{\infty}(n+1)\|b^da^{\pi}\|^{n+2}\|aa^{\pi}\|^n.$

$\hfill\displaystyle+\|b^{\pi}aa^{\pi}\|\|a^{\pi}baa^d\|\frac{\|a^d\|}{1-\|a^db\|}\sum_{n=0}^{\infty}(n+1)\|b^da^{\pi}\|^{n+2}\|aa^{\pi}\|^n.$
\end{proof}

As an immediate consequence of Theorem \ref{thm2}, we now derive
\begin{cor}\label{cor2}
Let $\mathcal{A}$ be a Banach algebra, and let $a,b,a^{\pi}b\in \mathcal{A}^{d}$. If
\begin{center}
$a^2ba^{\pi}=a^{\pi}aba$, $a^{\pi}b^2a=baba^{\pi}$,\\
$max\{\| a^dbaa^d\|, \| aa^{\pi}\|\| b^da^{\pi}\|\}<1$,
\end{center}
then $a+b$ has group inverse if and only if $(a+b)a^{\pi}$ has group inverse. In this case,     \\

\vspace{-3mm}
$\|(a+b)^{\#}-a^d\| $  \\

\vspace{-3mm}
\hfill
$\renewcommand{\arraystretch}{2.4}
\begin{array}{rcl}
 & \leq  &   \displaystyle\frac{\|a^db\|\|a^d\|}{1-\|a^db\|}+\|a^{\pi}baa^d\|\left(\frac{\|a^d\|}{1-\|a^db\|}\right)^2    \\
 &       &   \displaystyle +\|(a+b)a^{\pi}baa^d\|\left(\frac{\|a^d\|}{1-\|a^db\|}\right)^2\frac{\|b^da^{\pi}\|}{1-\|b^da^{\pi}\|\|aa^{\pi}\|}    \\
 &       &   \displaystyle +\|b^{\pi}aa^{\pi}\|\|(a+b)a^{\pi}baa^d\|\left(\frac{\|b^da^{\pi}\|}{1-\|b^da^{\pi}\|\|aa^{\pi}\|}\right)^2     \\
 &       &   \displaystyle \cdot\left(\frac{\|a^d\|}{1-\|a^db\|}\right)^2+\frac{\|b^da^{\pi}\|}{1-\|b^da^{\pi}\|\|aa^{\pi}\|}      \\
 &       &   \displaystyle +\|a^{\pi}baa^d\|\frac{\|a^d\|}{1-\|a^db\|}\frac{\|b^da^{\pi}\|}{1-\|b^da^{\pi}\|\|aa^{\pi}\|}    \\
 &       &   \displaystyle +\|b^{\pi}aa^{\pi}\|\left(\frac{\|b^da^{\pi}\|}{1-\|b^da^{\pi}\|\|aa^{\pi}\|}\right)^2    \\
 &       &   \displaystyle +\|b^{\pi}aa^{\pi}\|\|a^{\pi}baa^d\|\frac{\|a^d\|}{1-\|a^db\|}\left(\frac{\|b^da^{\pi}\|}{1-\|b^da^{\pi}\|\|aa^{\pi}\|}\right)^2
\end{array}$
\end{cor}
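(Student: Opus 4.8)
The plan is to obtain Corollary~\ref{cor2} as an immediate specialization of Theorem~\ref{thm2}: the extra norm hypothesis is precisely what lets one replace the infinite series in the bound of Theorem~\ref{thm2} by closed expressions.

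First I would observe that $\max\{\|a^dbaa^d\|,\|aa^{\pi}\|\|b^da^{\pi}\|\}<1$ gives in particular $\|a^dbaa^d\|<1$, so all the hypotheses of Theorem~\ref{thm2} are met. Hence $a+b$ is group invertible if and only if $(a+b)a^{\pi}$ is group invertible, and $\|(a+b)^{\#}-a^d\|$ is bounded above by the series expression in the conclusion of Theorem~\ref{thm2}.

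Next, setting $t=\|b^da^{\pi}\|\,\|aa^{\pi}\|$, the same hypothesis forces $t<1$, which is exactly what guarantees convergence of the two kinds of series occurring there. Using $\sum_{n\ge0}t^n=(1-t)^{-1}$ and $\sum_{n\ge0}(n+1)t^n=(1-t)^{-2}$, I would record the evaluations
$$\sum_{n=0}^{\infty}\|b^da^{\pi}\|^{n+1}\|aa^{\pi}\|^n=\frac{\|b^da^{\pi}\|}{1-\|b^da^{\pi}\|\,\|aa^{\pi}\|},\qquad \sum_{n=0}^{\infty}(n+1)\|b^da^{\pi}\|^{n+2}\|aa^{\pi}\|^n=\left(\frac{\|b^da^{\pi}\|}{1-\|b^da^{\pi}\|\,\|aa^{\pi}\|}\right)^{2}.$$
Substituting these, summand by summand, into the estimate of Theorem~\ref{thm2} yields exactly the bound claimed. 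I would also note that $\|a^db\|<1$: inside the proof of Theorem~\ref{thm2} one gets $b_3=0$, hence $a^db=a^dbaa^d$, so every denominator $1-\|a^db\|$ is positive and each term is well defined.

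The argument is routine. The only point requiring care is the bookkeeping of matching each summand in the bound of Theorem~\ref{thm2} with its counterpart in the corollary — in particular, recognizing that the series carrying the factor $(n+1)$ are the ones producing the squared fractions $\bigl(\|b^da^{\pi}\|/(1-\|b^da^{\pi}\|\|aa^{\pi}\|)\bigr)^2$. I do not expect any genuine obstacle beyond this.
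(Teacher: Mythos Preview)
Your proposal is correct and matches the paper's own proof essentially line for line: invoke Theorem~\ref{thm2}, then use $\|aa^{\pi}\|\|b^da^{\pi}\|<1$ to sum the two series to $\dfrac{\|b^da^{\pi}\|}{1-\|b^da^{\pi}\|\|aa^{\pi}\|}$ and $\left(\dfrac{\|b^da^{\pi}\|}{1-\|b^da^{\pi}\|\|aa^{\pi}\|}\right)^2$, and substitute. Your extra remark that $a^db=a^dbaa^d$ (since $b_3=0$), so $\|a^db\|<1$ and the denominators are positive, is a helpful clarification that the paper leaves implicit.
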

\begin{proof}
The first part follows from Theorem \ref{thm2}.  For the second part, by $\| aa^{\pi}\|\| b^da^{\pi}\|<1$, it is easy to compute that
$$\sum_{n=0}^{\infty}\|b^da^{\pi}\|^{n+1}\|aa^{\pi}\|^n=\frac{\|b^da^{\pi}\|}{1-\|b^da^{\pi}\|\|aa^{\pi}\|},$$
$$\sum_{n=0}^{\infty}(n+1)\|b^da^{\pi}\|^{n+2}\|aa^{\pi}\|^n=\left(\frac{\|b^da^{\pi}\|}{1-\|b^da^{\pi}\|\|aa^{\pi}\|}\right)^2.$$
So the upper bound of the norm $\|(a+b)^{\#}-a^d\| $ can be obtained by Theorem \ref{thm2}.
\end{proof}

\begin{cor} (see~\cite[Theorem 2.12]{LQW}) \label{thm3}
Let $\mathcal{A}$ be a Banach algebra, and let $a,b\in \mathcal{A}^{d}$. If $$\| a^dbaa^d\|<1, a^{\pi}ba=aba^{\pi},$$
then $a+b$ has group inverse if and only if $a^{\pi}(a+b)$ has group inverse. In this case,
$$\|(a+b)^{\#}-a^d\|\leq  \displaystyle\frac{\|a^db\|\|a^d\|}{1-\|a^db\|}+\|a^{\pi}\|\sum\limits_{n=0}^{\infty}\|b^d\|^{n+1}\|a\|^n.$$
\end{cor}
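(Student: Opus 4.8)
The plan is to derive this corollary directly from Theorem \ref{thm2} by checking that its hypotheses are satisfied and then simplifying the resulting bound. First I would observe that the condition $a^{\pi}ba = aba^{\pi}$ is the key special case. Multiplying $a^{\pi}ba = aba^{\pi}$ on the left by $a$ and using $a a^{\pi} = a^{\pi} a$ (which holds because $a^{\pi}$ commutes with $a$), I get $a a^{\pi} b a = a^2 b a^{\pi}$, i.e. $a^{\pi} a b a = a^2 b a^{\pi}$, which is exactly the first orthogonality-type hypothesis $a^2 b a^{\pi} = a^{\pi} a b a$ of Theorem \ref{thm2}. Similarly, applying $a^{\pi}ba = aba^{\pi}$ twice (or multiplying appropriately by $b$ and $a^{\pi}$) I would verify $a^{\pi} b^2 a = b a b a^{\pi}$: indeed $a^{\pi} b (b a) = a^{\pi} b (a^{\pi} b a)\cdot$-type manipulations, more carefully $b^2 a a^{\pi} = b (b a) a^{\pi}$ and using the commutation to move $a^{\pi}$ across, one lands on $b a b a^{\pi}$. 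I would also note that $a^{\pi} b \in \mathcal{A}^d$ is automatic here: since $a^{\pi} b a = a b a^{\pi}$ forces $a^{\pi} b$ and $b a^{\pi}$ to be related by the idempotent $a^{\pi}$, and $b \in \mathcal{A}^d$, one can invoke Lemma \ref{lem2} (with $e = a^{\pi}$, after checking $a^{\pi} b (1 - a^{\pi}) = 0$, which follows from $a^{\pi} b a a^d = (a^{\pi} b a) a^d = (a b a^{\pi}) a^d = 0$) to conclude $a^{\pi} b \in \mathcal{A}^d$ with $(a^{\pi} b)^d = a^{\pi} b^d$.

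Once the hypotheses of Theorem \ref{thm2} are verified, I would substitute into its norm estimate. The main simplification comes from two facts that hold under $a^{\pi} b a = a b a^{\pi}$. First, $a^{\pi} b a a^d = a b a^{\pi} a a^d = a b a^{\pi} a^d a \cdot (\ldots)$, more directly $a^{\pi}(ba)a^d = (aba^{\pi})a^d = ab(a^{\pi}a^d) = 0$ since $a^{\pi} a^d = (1 - aa^d)a^d = a^d - a a^d a^d = a^d - a^d = 0$. Hence every term in the bound of Theorem \ref{thm2} containing the factor $\| a^{\pi} b a a^d \|$ vanishes. Second, $a^d b = a^d b a a^d$ still holds (as shown inside the proof of Theorem \ref{thm2}), but under the stronger commuting hypothesis one also gets $a^{\pi} b = b a^{\pi}$ directly from $a^{\pi} b a = a b a^{\pi}$ combined with $b_3 = b_4$-type vanishing, which in turn gives $(a+b)a^{\pi} b a a^d = 0$ as well, killing the terms with factor $\| (a+b) a^{\pi} b a a^d \|$. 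What survives is precisely $\dfrac{\|a^db\|\|a^d\|}{1-\|a^db\|}$ from the $(a_1+b_1)^{-1} - a^d$ contribution together with $\displaystyle\sum_{n=0}^{\infty} \| b^d a^{\pi}\|^{n+1} \| a a^{\pi}\|^n$ from the $(a_2+b_2)^{\#}$ block; and since $a^{\pi} b = b a^{\pi}$, we have $b^d a^{\pi} = a^{\pi} b^d$ and $a a^{\pi} = a^{\pi} a$, so the surviving sum is bounded by $\| a^{\pi} \| \sum_{n=0}^{\infty} \| b^d\|^{n+1} \| a\|^n$ after pulling one copy of $\| a^{\pi}\|$ out and using submultiplicativity. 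This yields exactly the stated bound.

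The main obstacle I anticipate is the bookkeeping needed to confirm that the commuting condition $a^{\pi} b a = a b a^{\pi}$ genuinely forces all the "mixed" terms in the Theorem \ref{thm2} estimate to drop out, rather than merely being small. The cleanest route is to go back into the block-matrix computation of $(a+b)^{\#}$ inside the proof of Theorem \ref{thm2}: with $p = a a^d$, the hypothesis makes $b_4 = a^{\pi} b a a^d$ and one shows $b_4 = 0$ because $a^{\pi} b a a^d = a b a^{\pi} a^d = 0$. With $b_4 = 0$ the off-diagonal block $(a_2+b_2)^{\pi} b_4 (a_1+b_1)^{-2} - (a_2+b_2)^{\#} b_4 (a_1+b_1)^{-1}$ vanishes identically, so $(a+b)^{\#}$ is block-diagonal: $(a+b)^{\#} = (a_1+b_1)^{-1} \oplus (a_2+b_2)^{\#}$. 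Then $(a+b)^{\#} - a^d = \big((a_1+b_1)^{-1} - a_1^{-1}\big) \oplus (a_2+b_2)^{\#}$, and the two pieces are estimated separately exactly as in the proof of Theorem \ref{thm2}, giving the two-term bound. I would present the corollary's proof in this streamlined block-diagonal form rather than by literal term-by-term cancellation in the displayed inequality of Theorem \ref{thm2}, since it is shorter and more transparent, while still honestly "following from Theorem \ref{thm2}" because it reuses the same decomposition and the same two scalar estimates $\|(a_1+b_1)^{-1} - a_1^{-1}\| \le \|a^db\|\|a^d\|/(1-\|a^db\|)$ and $\|(a_2+b_2)^{\#}\| \le \|a^{\pi}\|\sum_n \|b^d\|^{n+1}\|a\|^n$.
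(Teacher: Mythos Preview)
Your overall strategy matches the paper's: derive $a^{\pi}b=ba^{\pi}$ and $a^{\pi}baa^d=0$ from the hypothesis $a^{\pi}ba=aba^{\pi}$, verify the two conditions of Theorem~\ref{thm2}, and then simplify the bound. Your observation that $a^{\pi}b\in\mathcal{A}^d$ must be checked (the corollary assumes only $a,b\in\mathcal{A}^d$) is a point the paper omits; your Lemma~\ref{lem2} argument via $a^{\pi}b(1-a^{\pi})=a^{\pi}baa^d=0$ is the right way to fill it.

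There is, however, a real gap in your simplification. You assert that once the factors $\|a^{\pi}baa^d\|$ and $\|(a+b)a^{\pi}baa^d\|$ vanish, only the two displayed terms survive. But the bound of Theorem~\ref{thm2} also contains
\[
\|b^{\pi}aa^{\pi}\|\sum_{n=0}^{\infty}(n+1)\|b^da^{\pi}\|^{n+2}\|aa^{\pi}\|^n,
\]
which carries neither of those factors and hence does not disappear by your argument. This term comes from the second sum in the formula for $(a_2+b_2)^{\#}$ supplied by Lemma~\ref{lem3}. To remove it you must exploit the stronger commutation $a^{\pi}b=ba^{\pi}$ that you (and the paper) derive: it forces $a_2=aa^{\pi}$ and $b_2=a^{\pi}ba^{\pi}=a^{\pi}b$ to commute, whence $b_2^d$ commutes with $a_2$ and
\[
b_2^{\pi}a_2\sum_{n\ge 0}(-1)^n(n+1)(b_2^d)^{n+2}a_2^n=\sum_{n\ge 0}(-1)^n(n+1)a_2^{n+1}\,b_2^{\pi}(b_2^d)^{n+2}=0,
\]
since $b_2^{\pi}b_2^d=0$. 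Thus $(a_2+b_2)^{\#}=\sum_{n\ge0}(-1)^n(b_2^d)^{n+1}a_2^n=a^{\pi}\sum_{n\ge0}(-1)^n(b^d)^{n+1}a^n$, and only then does the estimate $\|(a_2+b_2)^{\#}\|\le\|a^{\pi}\|\sum_{n\ge0}\|b^d\|^{n+1}\|a\|^n$ follow. Your block-diagonal reformulation with $b_4=0$ is the clean way to present this, but the claimed bound on $\|(a_2+b_2)^{\#}\|$ is not ``the same scalar estimate'' as in Theorem~\ref{thm2}; it is a sharper one that requires this extra commutativity step.
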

\begin{proof} Since $a^{\pi}ba=aba^{\pi}$, we have $aa^dba^{\pi}=a^d(a^{\pi}ba)=0$. Hence $aa^db=aa^dbaa^d$.
    Also we have $a^{\pi}baa^d=aba^{\pi}a^d=0$, and so $baa^d=aa^dbaa^d$. Accordingly,
    $a^dab=baa^d$. Hence $a^{\pi}b=ba^{\pi}$. Therefore we check that
$$\begin{array}{c}
a^2ba^{\pi}=a(aba^{\pi})=a(a^{\pi}ba)=a^{\pi}aba,\\
a^{\pi}b^2a=a^{\pi}ba^{\pi}ba=a^{\pi}baba^{\pi}=a^{\pi}bab=ba^{\pi}ab=baba^{\pi}.
\end{array}$$ Since $a^{\pi}ba=aba^{\pi}$, we have $a^{\pi}baa^d=aba^{\pi}a^d=0$.
By virtue of Theorem \ref{thm2}, we obtain the upper bound of the norm  \\
\indent
\hspace{8mm}$\|(a+b)^{\#}-a^d\|  \leq \displaystyle\frac{\|a^db\|\|a^d\|}{1-\|a^db\|}+\|a^{\pi}\|\sum\limits_{n=0}^{\infty}\|b^d\|^{n+1}\|a\|^n. $
\end{proof}

We present a numerical example to demonstrate Theorem \ref{thm2} as follow.

\begin{exam} Let
$a=\left(\begin{array}{cccc}
1   &   0   &   0   &   0   \\
0   &   1   &   0   &   0   \\
0   &   0   &   0   &   1   \\
0   &   0   &   0   &   0
\end{array}\right),
b=\left(\begin{array}{cccc}
\frac{1}{2}  &  0  &  0  &  0                   \\
0            &  0  &  0  &  0                   \\
0            &  0  &  0  &  0                   \\
0            &  0  &  0  &  2
\end{array}\right)\in \mathbb{C}^{4\times 4}.$
Then
$$a^d=\left(\begin{array}{cccc}
	1   &   0   &   0   &   0   \\
	0   &   1   &   0   &   0   \\
	0   &   0   &   0   &   0   \\
	0   &   0   &   0   &   0
\end{array}\right),
a^{\pi}=\left(\begin{array}{cccc}
	0   &   0   &   0   &   0   \\
	0   &   0   &   0   &   0   \\
	0   &   0   &   1   &   0   \\
	0   &   0   &   0   &   1
\end{array}\right).$$
$$b^d=\left(\begin{array}{cccc}
	2   &   0   &   0   &   0   \\
	0   &   0   &   0   &   0   \\
	0   &   0   &   0   &   0   \\
	0   &   0   &   0   &   \frac{1}{2}
\end{array}\right),
b^{\pi}=\left(\begin{array}{cccc}
	0   &   0   &   0   &   0   \\
	0   &   1   &   0   &   0   \\
	0   &   0   &   1   &   0   \\
	0   &   0   &   0   &   0
\end{array}\right).$$
We compute that
$$a^dbaa^d=\mathrm{diag}(\frac{1}{2},0,0,0),$$ and so
$\| a^dbaa^d\|<1$. Moreover, we have $a^2ba^{\pi}=a^{\pi}aba, a^{\pi}b^2a=baba^{\pi}.$
But we have
$$aba^{\pi}=
\left(\begin{array}{cccc}
	0   &   0   &   0   &   0              \\
	0   &   0   &   0   &   0              \\
	0   &   0   &   0   &   2    \\
	0   &   0   &   0   &   0
\end{array}\right),
a^{\pi}ba=0,$$ and then $a^{\pi}ab\neq a^{\pi}ba$.

In this case,
$$a^db=\mathrm{diag}(\frac{1}{2},0,0,0), b^da^{\pi}=\mathrm{diag}(0,0,0,\frac{1}{2}),a^{\pi}baa^d=0, $$
$$aa^{\pi}=b^{\pi}aa^{\pi}=
\left(\begin{array}{cccc}
	0   &   0   &   0   &   0        \\
	0   &   0   &   0   &   0        \\
	0   &   0   &   0   &   1        \\
	0   &   0   &   0   &   0
\end{array}\right).$$
By the upper bound in Theorem \ref{thm2}, we should have
$$\|(a+b)^{\#}-a^d\|\leq 2+0+0+0+1+0+1+0=4.$$
Indeed, since
$$(a+b)^{\#}
=\left(\begin{array}{cccc}
	\frac{2}{3}   &   0   &   0   &   0      \\
	0         &   1   &   0   &   0      \\
	0         &   0   &   0   &   \frac{1}{4}      \\
	0         &   0   &   0   &   \frac{1}{2}
\end{array}\right),$$
we compute that $$\|(a+b)^{\#}-a^d\|=\frac{13}{12}<4.$$
\end{exam}

\vskip10mm

\end{document}